\documentclass[11pt,a4paper]{amsart}
\usepackage{amsfonts}
\usepackage{amsmath, amsthm}
\usepackage{amssymb}
\usepackage{latexsym}
\usepackage{exscale}

\usepackage[colorlinks=true, pdfstartview=FitH, linkcolor=blue, citecolor=red, urlcolor=blue]{hyperref} 
%\day=1 \month=09 \year=2009

\allowdisplaybreaks

\headheight=8pt
\topmargin=0pt
\textheight=660pt 
\textwidth=436pt
\oddsidemargin=10pt
\evensidemargin=10pt

\newtheorem{thm}{Theorem}[section]
 \newtheorem{cor}[thm]{Corollary}
 
 \newtheorem{prop}[thm]{Proposition}
 \theoremstyle{definition}
 
 \theoremstyle{remark}
 \newtheorem{rem}[thm]{Remark}
 
 \numberwithin{equation}{section}

\def\be#1 {\begin{equation} \label{#1}}
\newcommand{\ee}{\end{equation}}

\newcommand{\R}{\mathbb{R}}

\newcommand{\N}{\mathbb{N}}
\newcommand{\la}{\lambda}

\def\XXint#1#2#3{{\setbox0=\hbox{$#1{#2#3}{\int}$}
     \vcenter{\hbox{$#2#3$}}\kern-.5\wd0}}

\day=11 \month=4 \year=2013
\date{\today}

\subjclass[2000]{47D03 ; 35P05 ; 42B15}
\keywords{ Restriction estimates ; semigroup ; spectral multipliers ; dispersive estimates}

\begin{document}
\title[Restriction estimates]{Restriction estimates via the derivatives of the  heat semigroup and connexion with dispersive estimates} 
\author{Fr\'ed\'eric Bernicot \& El Maati Ouhabaz}
\thanks{The two authors are partly supported by the ANR under the project "Harmonic Analysis at its Boundaries" no. ANR-12-BS01-0013. The first author is also partly supported by the ANR under the project AFoMEN no. 2011-JS01-001-01}
\maketitle

\begin{abstract}{We consider an abstract non-negative self-adjoint operator $H$ on an $L^2$-space. We derive a characterization  for the restriction estimate
$\| dE_H(\lambda) \|_{L^p \to L^{p'}} \le C \lambda^{\frac{d}{2}(\frac{1}{p} - \frac{1}{p'}) -1}$ 
 in terms of  higher order derivatives of the semigroup $e^{-tH}$. We provide an alternative proof  of a result in \cite{COSY} which asserts 
 that dispersive estimates imply restriction estimates. We also prove $L^p-L^{p'}$ estimates for the derivatives of the spectral resolution of $H$.}
\end{abstract}

\section
{Introduction and main results}\label{S1}

Let $(X,\mu)$ be a measured space. That is $X$ is a non-empty set endowed with a positive measure $\mu$. We  consider a non-negative self-adjoint operator $H$  on $L^2 = L^2(X, \mu)$. We denote by $dE_H$ the spectral resolution of $H$. Since we will be interested in $L^p-L^{p'}$  estimates for $dE_H(\la)$ we shall assume throughout this note that the spectrum of $H$ is continuous.  The $L^p-L^{p'}$ norm will be denoted by $ \|Â dE_H(\lambda) \|_{L^p \to L^{p'}}$ and 
$p'$ is  the conjugate number of $p$.  
%%%%%%%%%%%%%%%%%%

We first discuss the Euclidean Laplacian. Suppose that $X = \R^d$ and $H = -Â \Delta$ (the positive Laplace operator) on $L^2(\R^d)$. It is a well-known fact that as a consequence of the Stein-Tomas estimates for the restricted Fourier transform to the unit sphere, the spectral measure $dE_{-\Delta}(\la)$ is a bounded operator from $L^p$  into 
$L^{p'}$ for all $p \le \frac{2d + 2}{d + 3}$. In  addition,
$$\|Â dE_{-\Delta} (\lambda) \|_{L^p \to L^{p'}} \le C \lambda^{\frac{d}{2}(\frac{1}{p} - \frac{1}{p'}) -1}, \, \, \la > 0.$$
Such estimate is sometimes referred to as the $(p,2)$ restriction estimate of Stein-Tomas.  We refer to the introductions of the papers \cite{GHS} and 
\cite{COSY} for more details about this. 

The above  restriction estimate was extended to the setting of  asymptotically conic manifolds in \cite{GHS}. In the paper   \cite{COSY}  the restriction estimate  \begin{equation}\label{Rest}
\|Â dE_H(\lambda) \|_{L^p \to L^{p'}} \le C \lambda^{\frac{d}{2}(\frac{1}{p} - \frac{1}{p'}) -1},
\end{equation}
was studied in an abstract setting. (Here  $d$ is any positive constant).  It is also proved there that (\ref{Rest}) holds for several operators. 
 
\bigskip

 One of the  aims  of this note is to prove other characterizations of (\ref{Rest}) in an abstract setting.  
In the following result  we show  that the restriction estimate for $H$ can be  characterized in terms of higher order derivatives of the corresponding semigroup $e^{-tH}$. More precisely, 

\begin{thm}\label{th1}
Let $d$ be a positive constant and fix $p \in [1, 2)$.  The following assertions are equivalent.\\
1) The  restriction estimate (\ref{Rest}) holds for every $\lambda>0$;\\
2) There exists a positive constant $C$ such that 
\begin{equation}\label{Deriv}
\| H^N e^{-tH} \|_{L^p \to L^{p'}} \le C (N-1)! N^{\frac{d}{2}(\frac{1}{p} - \frac{1}{p'})} t^{-N - \frac{d}{2}(\frac{1}{p} - \frac{1}{p'})},
\end{equation} 
for all $t > 0$ and all $N \in \N$; \\
3) There exists a positive constant $C$ such that 
 \begin{equation}\label{ResF}
\| F(H) \|_{L^p \to L^{p'}} \le C R^{\frac{d}{2}(\frac{1}{p} - \frac{1}{p'})} \int_\R | F(s) | \frac{ds}{s},
\end{equation}
for all $R > 0$ and bounded measurable function $F$ with supported in $[0, R]$. 
\end{thm}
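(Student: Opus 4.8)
The plan is to set $\gamma := \frac{d}{2}(\frac{1}{p}-\frac{1}{p'})$, which is strictly positive since $p\in[1,2)$, and to exploit throughout the spectral representation $F(H)=\int_0^\infty F(\lambda)\,dE_H(\lambda)$ together with the triangle inequality for operator-valued integrals, $\|F(H)\|_{L^p\to L^{p'}}\le\int_0^\infty|F(\lambda)|\,\|dE_H(\lambda)\|_{L^p\to L^{p'}}\,d\lambda$. I would prove the cycle $1)\Rightarrow 3)\Rightarrow 2)\Rightarrow 1)$. The conceptual point that drives the whole argument is that, after normalisation, the family $\lambda\mapsto\lambda^N e^{-t\lambda}$ is an approximate identity concentrating at $\lambda_0=N/t$; the precise powers of $N$ and $t$ appearing in (\ref{Deriv}) are exactly those needed to make the reconstruction of the spectral density work.

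The implication $1)\Rightarrow 3)$ is immediate: for $F$ supported in $[0,R]$, the displayed triangle inequality and (\ref{Rest}) give $\|F(H)\|_{L^p\to L^{p'}}\le C\int_0^R|F(\lambda)|\lambda^{\gamma-1}\,d\lambda$, and since $\lambda^\gamma\le R^\gamma$ on $[0,R]$ one rewrites $\lambda^{\gamma-1}d\lambda=\lambda^\gamma\frac{d\lambda}{\lambda}\le R^\gamma\frac{d\lambda}{\lambda}$, which is exactly (\ref{ResF}). For $3)\Rightarrow 2)$ I would apply (\ref{ResF}) to the (noncompactly supported) function $\lambda^N e^{-t\lambda}$ after a dyadic splitting: writing $F_k=\lambda^N e^{-t\lambda}\mathbf{1}_{[2^k,2^{k+1})}$, each $F_k$ is supported in $[0,R_k]$ with $R_k=2^{k+1}$, and on that annulus $R_k^\gamma\le 2^\gamma\lambda^\gamma$. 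Summing the resulting bounds over $k\in\Z$ recombines the integral into $\int_0^\infty\lambda^{N+\gamma-1}e^{-t\lambda}\,d\lambda$, so that $\|H^Ne^{-tH}\|_{L^p\to L^{p'}}\le C\,\Gamma(N+\gamma)\,t^{-N-\gamma}$. It then remains to invoke the standard Gamma ratio estimate $\Gamma(N+\gamma)\le C_\gamma\,\Gamma(N)N^\gamma=C_\gamma (N-1)!\,N^\gamma$, which yields (\ref{Deriv}).

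The crux is $2)\Rightarrow 1)$. Fix $\lambda_0>0$ and set $t=N/\lambda_0$. The normalised function $\Phi_{N,t}(\lambda):=\frac{t^{N+1}}{N!}\lambda^N e^{-t\lambda}$ is a probability density on $(0,\infty)$ (a Gamma density) with mean $(N+1)/t\to\lambda_0$ and variance $(N+1)/t^2\to0$, hence $\Phi_{N,t}\,d\lambda\to\delta_{\lambda_0}$ weakly as $N\to\infty$. Inserting (\ref{Deriv}) into $\Phi_{N,t}(H)=\frac{t^{N+1}}{N!}H^Ne^{-tH}$ gives $\|\Phi_{N,t}(H)\|_{L^p\to L^{p'}}\le C\,N^{\gamma-1}t^{1-\gamma}$, and substituting $t=N/\lambda_0$ the powers of $N$ cancel exactly, leaving the bound $C\,\lambda_0^{\gamma-1}$ uniformly in $N$. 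Passing to the limit $N\to\infty$ should then transfer this bound to the spectral density at $\lambda_0$, which is precisely (\ref{Rest}).

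The main obstacle is making this last limit rigorous. I would test against fixed $f,g$ in a dense class and work with the scalar measures $\nu_{f,g}=\langle dE_H(\cdot)f,g\rangle$, which have finite total variation; then $\langle\Phi_{N,t}(H)f,g\rangle=\int_0^\infty\Phi_{N,t}\,d\nu_{f,g}$, and since $\Phi_{N,t}$ is a genuine approximate identity this converges to the density $w_{f,g}(\lambda_0)$ of $\nu_{f,g}$ at every Lebesgue point $\lambda_0$. Comparing with the uniform bound $|\langle\Phi_{N,t}(H)f,g\rangle|\le C\lambda_0^{\gamma-1}\|f\|_p\|g\|_p$ yields $|w_{f,g}(\lambda_0)|\le C\lambda_0^{\gamma-1}\|f\|_p\|g\|_p$ for a.e.\ $\lambda_0$, which is the content of (\ref{Rest}). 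The technical care here lies in handling the a.e.\ exceptional sets uniformly in $f,g$ (using a countable dense family and the fact that the density of $dE_H$ is determined almost everywhere) and in interpreting $\|dE_H(\lambda)\|_{L^p\to L^{p'}}$ as the operator norm of this almost-everywhere defined density; the continuity of the spectrum assumed at the outset is what guarantees $dE_H$ has no atoms, so that this density picture is the correct one.
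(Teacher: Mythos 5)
Your argument is correct in substance but follows a genuinely different route from the paper. The paper proves the cycle $1)\Rightarrow 2)\Rightarrow 3)\Rightarrow 1)$, and its central tool is Theorem \ref{th2}: the integrated approximate-identity formula $\lim_N \frac{1}{(N-1)!}\int_0^\infty \phi(s^{-1})\langle((N-1)sH)^Ne^{-s(N-1)H}f,g\rangle\frac{ds}{s}=\langle\phi(H)f,g\rangle$ for H\"older $\phi$, which is what drives $2)\Rightarrow 3)$ (followed by a mollification step to pass from H\"older to bounded measurable $F$); the step $3)\Rightarrow 1)$ is then done with the box functions $\chi_{(\lambda-\epsilon,\lambda+\epsilon]}$. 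You reverse the cycle to $1)\Rightarrow 3)\Rightarrow 2)\Rightarrow 1)$ and avoid Theorem \ref{th2} altogether: your $3)\Rightarrow 2)$ via dyadic decomposition of $\lambda^Ne^{-t\lambda}$ plus the Gamma-ratio bound $\Gamma(N+\gamma)\le C_\gamma(N-1)!\,N^\gamma$ is clean and correct, and your $2)\Rightarrow 1)$ replaces the paper's box functions by the Gamma densities $\Phi_{N,N/\lambda_0}$, with the powers of $N$ cancelling exactly as you say. Your route buys a shorter path (no mollification argument, no separate approximate-identity theorem); the paper's route isolates Theorem \ref{th2} as a reusable tool that it needs again for Corollary \ref{cor2} and the derivative estimates of Section 3.

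One step needs repair. In $2)\Rightarrow 1)$ you assert that $\int_0^\infty\Phi_{N,t}\,d\nu_{f,g}$ converges to the density $w_{f,g}(\lambda_0)$ at every Lebesgue point; this presupposes that $\nu_{f,g}$ is absolutely continuous (continuity of the spectrum only excludes atoms, not a singular continuous part), and an a.e.\ bound on the absolutely continuous part alone is weaker than assertion $1)$. The fix is to use positivity: for $f\in L^p\cap L^2$ the measure $\nu_{f,f}$ is nonnegative, and $\Phi_{N,N/\lambda_0}\gtrsim \sqrt{N}/\lambda_0$ on an interval $I_N$ of length $\simeq\lambda_0/\sqrt{N}$ centered at $\lambda_0$, so the uniform bound $\int\Phi_{N,N/\lambda_0}\,d\nu_{f,f}\le C\lambda_0^{\gamma-1}\|f\|_p^2$ yields $\nu_{f,f}(I_N)\lesssim\lambda_0^{\gamma-1}|I_N|\,\|f\|_p^2$ for all $N$; this gives absolute continuity and the pointwise bound on the density simultaneously, and is then transferred to $\nu_{f,g}$ by polarization. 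With this adjustment your proof is complete, and is at the same level of rigor as the paper's own $3)\Rightarrow 1)$ step, which likewise interprets $dE_H(\lambda)$ through the limit of difference quotients $\epsilon^{-1}E_H((\lambda-\epsilon,\lambda+\epsilon])$.
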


The main novelty here is the characterization of (\ref{Rest}) by (\ref{Deriv}). The equivalence of (\ref{Rest})  and (\ref{ResF}) is in the spirit of Proposition 2.4, Section 2.2 in \cite{COSY}. Note however that in contrast to that  proposition in \cite{COSY} we do not assume here that the volume of balls in $X$ 
is polynomial. Moreover, the $L^1$ norm in the RHS in (\ref{ResF}) is taken {\it w.r.t.}  $\frac{ds}{s}$ rather than $ds$ as in \cite{COSY} and it is obvious that 
$$\int_0^R | F(s) | \frac{ds}{s} = \int_0^1 | F(Rs) | \frac{ds}{s}  \ge  \int_0^1 | F(R s)| ds.$$

\bigskip

One of the main ingredients in the proof of Theorem \ref{th1}  is the following result which expresses the spectral  measure in terms of the semigroup.  We denote by 
$\langle ., . \rangle$ the scalar product of $L^2$. We have 

\begin{thm} \label{th2} 
Consider a bounded and uniformly $\rho$-H\"older function $\phi$ (for some $\rho\in(0,1]$). Then for every  $f,g\in L^2$ we have
$$ \lim_{N\rightarrow \infty} \frac{1}{(N-1)!} \int_0^\infty \phi(s^{-1}) \langle ((N-1)sH)^N e^{-s(N-1)H} f,g\rangle \frac{ds}{s}= \langle \phi(H)f,g\rangle.$$
\end{thm}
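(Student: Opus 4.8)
The plan is to reduce everything to a scalar statement via the spectral theorem and then recognise the $s$-integral as an average of $\phi$ against a probability density that concentrates as $N\to\infty$. Fix $f,g\in L^2$ and write $d\nu(\lambda)=d\langle E_H(\lambda)f,g\rangle$, a complex measure on $[0,\infty)$ whose total variation is at most $\|f\|_2\|g\|_2$ (by orthogonality of $E_H(A_i)f$ over disjoint $A_i$ and Cauchy--Schwarz). By the spectral theorem the bracket in the integrand equals $\int_0^\infty ((N-1)s\lambda)^N e^{-s(N-1)\lambda}\,d\nu(\lambda)$, and since $\phi$ is bounded while $\int_0^\infty ((N-1)s\lambda)^N e^{-s(N-1)\lambda}\frac{ds}{s}=(N-1)!$ for each $\lambda>0$, Fubini's theorem applies and moves the $s$-integration inside $d\nu$. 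This recasts the left-hand side as $\int_0^\infty K_N(\lambda)\,d\nu(\lambda)$, where
\[
K_N(\lambda)=\frac{1}{(N-1)!}\int_0^\infty \phi(s^{-1})\,((N-1)s\lambda)^N e^{-s(N-1)\lambda}\,\frac{ds}{s}.
\]

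The key computation is the substitution $u=(N-1)\lambda s$, which gives $\frac{ds}{s}=\frac{du}{u}$, $s^{-1}=(N-1)\lambda/u$ and $((N-1)s\lambda)^N=u^N$, so that
\[
K_N(\lambda)=\int_0^\infty \phi\Big(\tfrac{(N-1)\lambda}{u}\Big)\,d\gamma_N(u),\qquad d\gamma_N(u)=\frac{u^{N-1}e^{-u}}{(N-1)!}\,du.
\]
Here $\gamma_N$ is a probability measure (the Gamma law of shape $N$), with mean $N$ and variance $N$, hence concentrated around $u\simeq N$ at scale $\sqrt N$. Writing $K_N(\lambda)-\phi(\lambda)=\int_0^\infty[\phi((N-1)\lambda/u)-\phi(\lambda)]\,d\gamma_N(u)$, I would split the $u$-axis into a central band $\{|u-N|\le\delta N\}$ and its complement. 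On the central band one checks the elementary inequality $\big|\tfrac{(N-1)\lambda}{u}-\lambda\big|\le \lambda\frac{\delta+1/N}{1-\delta}$, so the $\rho$-H\"older bound gives $|\phi((N-1)\lambda/u)-\phi(\lambda)|\lesssim(\delta\lambda)^\rho$ for large $N$; on the complement I use only $|\phi((N-1)\lambda/u)-\phi(\lambda)|\le 2\|\phi\|_\infty$ together with $\gamma_N(|u-N|>\delta N)\le 1/(\delta^2 N)$ (Chebyshev). Letting first $N\to\infty$ and then $\delta\to0$ yields $K_N(\lambda)\to\phi(\lambda)$ for every $\lambda>0$; the blow-up of $(N-1)\lambda/u$ as $u\to0$ is harmless, since $\phi$ is bounded and $\gamma_N$ suppresses small $u$ exponentially.

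Finally I would pass the limit inside $\int_0^\infty(\cdot)\,d\nu$ by dominated convergence: each $K_N$ is bounded by $\|\phi\|_\infty$ and $\nu$ is finite, while the continuity of the spectrum forces $\nu(\{0\})=0$, so the single bad point $\lambda=0$ (where $K_N(0)=0\neq\phi(0)$ in general) is negligible. This delivers $\int_0^\infty K_N\,d\nu\to\int_0^\infty\phi\,d\nu=\langle\phi(H)f,g\rangle$, as required. The main obstacle I anticipate is the central-band estimate: one must quantify how sharply $\gamma_N$ concentrates and combine the H\"older modulus with the Chebyshev tail in a way that survives the order of limits $N\to\infty$ then $\delta\to0$. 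The measure-theoretic steps (Fubini and the final dominated convergence against the complex spectral measure $\nu$) are routine once the total variation of $\nu$ is controlled, so the analytic heart of the proof is the Gamma-concentration argument.
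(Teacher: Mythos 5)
Your proof is correct, and while its overall skeleton matches the paper's (reduce via the spectral theorem to pointwise convergence of a kernel $\mu_N(\lambda)\to\phi(\lambda)$, then conclude by dominated convergence against the finite spectral measure), the way you establish the pointwise convergence is genuinely different and arguably cleaner. The paper keeps the variable $s\lambda$, invokes Stirling's formula to normalize the density, splits into three regions $s\lambda\le u_N$, $u_N<s\lambda<v_N$, $s\lambda\ge v_N$ with a carefully tuned width $\epsilon_N$ satisfying $N\epsilon_N^2=(\log N)^2$, and must then beat an extra factor $N^{1/2}$ coming from bounding the density by its supremum times the length of the central band; this is exactly where the quantitative H\"older hypothesis $\omega(s)\lesssim s^\rho$ is needed. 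Your substitution $u=(N-1)\lambda s$ identifies the kernel as an average of $\phi((N-1)\lambda/u)$ against the Gamma$(N)$ law, and Chebyshev's inequality gives the tail bound $1/(\delta^2N)$ for a \emph{fixed} band width $\delta N$, so no Stirling normalization and no $N^{1/2}$ loss ever appear. A notable consequence is that your argument only uses the modulus of continuity qualitatively: $\limsup_N|K_N(\lambda)-\phi(\lambda)|\le\omega(C\delta\lambda)\to0$ as $\delta\to0$, so bounded \emph{uniform continuity} of $\phi$ already suffices, whereas the paper's choice of $\epsilon_N$ genuinely requires the H\"older rate. Your handling of the measure-theoretic side (working directly with the complex measure $\nu=\langle dE_H(\cdot)f,g\rangle$ of total variation at most $\|f\|_2\|g\|_2$ instead of polarizing, and noting that $\nu(\{0\})=0$ by the standing continuous-spectrum assumption so that the exceptional point $\lambda=0$ is harmless) is sound and, on the last point, slightly more careful than the paper's own write-up.
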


A useful consequence of the latter theorem is the following equality  for the derivatives  of $dE_H(\la)$  (in which the limit has to be understood in the weak sense)
$$ \frac{d^k}{d \la^k} dE_H(\la) = \lim_{N\to \infty} \  \  \frac{1}{N!}  \frac{d^k}{d \la^k} \left[ \la^{-1} (N \la^{-1}H)^{N+1} e^{-N \la^{-1}H} \right], $$
for  $\la > 0$. 

\medskip

As an application we show that dispersive estimates for $H$ imply the restriction estimate (\ref{Rest}) as well as $L^p-L^{p'}$ estimates for the derivatives 
$\frac{d^k}{d \la^k} dE_H(\la)$ for $k \le d/2 -1$. The estimates for the derivatives is new whereas the case $k = 0$ was already proved  in \cite{COSY}.

\medskip

We finish this introduction by explaining why it is interesting to prove the restriction estimate (\ref{Rest}).  Let us assume now that $(X,\mu)$ is equipped with a metric $\rho$ and assume that for every $x \in X$, $r > 0$, the volume $\mu(B(x,r))$ of the open ball $B(x,r)$ satisfies
$$c_1 r^d \le \mu(B(x,r)) \le c_2 r^d,$$
where $c_1$ and $c_2$ are positive constants. Suppose in addition that $H$ satisfies the finite speed of propagation property, that is the support of the kernel of
$\cos(t\sqrt{H})$ is contained in $\{ (x,y ) \in X \times X, \rho(x,y) \le t \}$.   Under these assumptions, it  is proved in \cite{COSY} (see also \cite{GHS} for the first assertion) that the restriction estimate implies sharp spectral multiplier theorems. More precisely, 

\begin{thm}\label{th3} Suppose that  the restriction estimate (\ref{Rest}) holds for some fixed $p \in [1, 2)$. Then the following assertions hold. 
\begin{itemize}
\item[(i)] {\bf Compactly supported multipliers:}  Let   $F$ be  an even function with support  in $ [-1,1]$
 and $F \in W^{\beta,2}(\R)$ for some $\beta>d(1/p-1/2)$. Then $F(H)$ is bounded on $L^p(X)$, and
$$ \sup_{t>0}\|F(tH)\|_{L^p \to L^p} \leq
C\|F\|_{W^{\beta,2}}.$$

\item[(ii)]{\bf  General  multipliers:} Suppose that    $F$ is an even  bounded  Borel
function which satisfies  $\sup_{t>0}\|\eta (\cdot) F(t \cdot)  \|_{W^{\beta, 2}}<\infty $ for some
$\beta>\max\{d(1/p-1/2),1/2\}$
 and some non-trivial  function $\eta \in C_c^\infty(0,\infty)$. Then
$F(H)$ is bounded on $L^r(X)$ for all $p<r<p'$.
In addition,
\begin{eqnarray*}
   \|F(H)  \|_{L^r\to L^r}\leq    C_\beta\Big(\sup_{t>0}\|\eta (\cdot) F(t \cdot) \|_{W^{\beta, 2}}
   + |F(0)|\Big).
\end{eqnarray*}
\end{itemize}
\end{thm}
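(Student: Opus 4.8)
The plan is to deduce Theorem~\ref{th3} from the restriction estimate~(\ref{Rest}) by combining it with the finite speed of propagation for $\cos(t\sqrt{H})$ and the volume regularity $c_1 r^d \le \mu(B(x,r)) \le c_2 r^d$, following the Stein--Tomas/Duong--Ouhabaz--Sikora machinery. Since Theorem~\ref{th1} makes~(\ref{Rest}) equivalent to~(\ref{ResF}), and since $\frac d2(\frac1p-\frac1{p'})=d(\frac1p-\frac12)=:\gamma$, the first step is to recast~(\ref{ResF}) as a Stein--Tomas/Sogge type $L^p\to L^2$ estimate. For $F$ supported in $[0,R]$ one has, using that $H$ is self-adjoint,
\begin{equation*}
\|F(H)f\|_{L^2}^2=\langle |F|^2(H)f,f\rangle\le \||F|^2(H)\|_{L^p\to L^{p'}}\,\|f\|_{L^p}^2 .
\end{equation*}
Because $|F|^2$ is again supported in $[0,R]$, (\ref{ResF}) bounds the operator norm on the right by $C R^{\gamma}\int_0^R|F(s)|^2\frac{ds}{s}$, whence
\begin{equation*}
\|F(H)\|_{L^p\to L^2}\le C\,R^{\gamma/2}\Big(\int_0^R |F(s)|^2\,\frac{ds}{s}\Big)^{1/2},
\end{equation*}
which is exactly the spectrally localized Plancherel input used in sharp multiplier theorems.

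The second step localizes in space. Writing $G(\xi):=F(t\xi^2)$, so that $G(\sqrt H)=F(tH)$ with $G$ even, I would use the cosine representation $G(\sqrt H)=\frac1{2\pi}\int_\R \hat G(\tau)\cos(\tau\sqrt H)\,d\tau$ together with the finite speed of propagation: if $\hat G$ is supported in $[-r,r]$ then the kernel of $G(\sqrt H)$ is supported in $\{(x,y):\rho(x,y)\le r\}$. Splitting $\hat G$ dyadically in $\tau$ then decomposes $F(tH)$ into pieces whose kernels are supported at controlled distances from the diagonal, the near-diagonal piece living on the scale $r\sim t^{1/2}$ dictated by the frequency cutoff $|\xi|\lesssim t^{-1/2}$ of $G$.

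The central step is to estimate each dyadic piece in $L^p\to L^p$ and to sum. For the near-diagonal piece I would convert the local $L^p\to L^2$ estimate of Step~1 into an $L^p\to L^p$ estimate by covering $X$ with balls of radius $\sim t^{1/2}$ and applying H\"older together with the volume regularity $\mu(B)\sim r^d$; this is precisely where the homogeneity exponent $d$ and the threshold $\beta>\gamma=d(1/p-1/2)$ enter. The off-diagonal pieces are controlled by the rapid decay of $\hat G$, quantified through the Sobolev norm $\|F\|_{W^{\beta,2}}$, and summing the resulting geometric series in the dyadic parameter yields $\sup_{t>0}\|F(tH)\|_{L^p\to L^p}\le C\|F\|_{W^{\beta,2}}$, proving~(i). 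I expect this summation --- balancing the spatial localization scale, the volume factor $r^d$, and the fractional-smoothness control of $\hat G$ --- to be the main obstacle, as it is the technical core of the argument.

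Finally, part~(ii) follows from~(i) by a Calder\'on--Zygmund/singular-integral argument: decomposing a general multiplier $F$ into blocks at frequency scales $2^j$, applying~(i) to obtain uniform $L^p\to L^p$ bounds for each block, and summing the global series using the extra hypothesis $\beta>\tfrac12$ together with the Davies--Gaffney/off-diagonal estimates of the semigroup $e^{-tH}$. This shows $F(H)$ is of weak type $(p,p)$; by the symmetry of the hypotheses under duality the same holds at the exponent $p'$, and interpolation gives boundedness on $L^r$ for all $p<r<p'$ with the stated norm control, completing the proof.
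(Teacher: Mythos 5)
The paper itself gives no proof of Theorem~\ref{th3}: it is quoted from \cite{COSY} (and \cite{GHS} for part (i)), and it is only valid under the additional standing hypotheses stated just before it (volume regularity $c_1r^d\le\mu(B(x,r))\le c_2r^d$ and finite speed of propagation for $\cos(t\sqrt H)$), which you correctly invoke. Your outline is exactly the strategy of that reference: derive from (\ref{ResF}) the localized $L^p\to L^2$ ``Plancherel'' estimate via $\|F(H)f\|_{L^2}^2=\langle |F|^2(H)f,f\rangle$ (this step is correct), then use the cosine representation and finite propagation speed to localize kernels, and sum a dyadic decomposition of $\widehat G$.

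As a proof, however, the proposal has a genuine gap precisely where you flag ``the main obstacle.'' Two things are missing. First, the conversion of the $L^p\to L^2$ bound into an $L^p\to L^p$ bound for each dyadic piece: one must cover $X$ by balls of radius $\sim 2^\ell t^{1/2}$, use bounded overlap, finite propagation speed, H\"older's inequality and $\mu(B(x,r))\simeq r^d$ to get a factor $(2^\ell t^{1/2})^{d(1/p-1/2)}$, and then show that the $\ell$-th piece carries a decay $\lesssim 2^{-\ell\beta}\|F\|_{W^{\beta,2}}$ so that the series converges exactly when $\beta>d(1/p-1/2)$; none of this is carried out. Second, and more subtly, the dyadic pieces $G_\ell$ (obtained by truncating $\widehat G$) are no longer compactly supported in the spectral variable, so the restriction/Plancherel estimate of your Step~1 does not apply to them directly; one must reinsert a spectral cutoff (using that $F$ itself is supported in $[-1,1]$) and control the resulting error terms. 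This issue is handled in \cite{COSY} through their $(ST^2_{p,2})$ formalism and is a real technical point your sketch glosses over. The same remark applies to part (ii), where the weak-type and interpolation argument is only named, not executed. In short: right roadmap, matching the cited source, but the theorem's actual content --- the quantitative summation --- is not proved.
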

A version of this  theorem for general doubling spaces is proved  in \cite{COSY}. One can apply 
  Theorem \ref{th3}  to prove summability results for Bochner-Riesz means on $L^p$-spaces.

%%%%%%%%%%%%%%%%%%%%%%%%%%%%%

\section{Proofs}\label{S2} 
We start with the proof of Theorem \ref{th2}. We shall write $\lesssim$ for $\le$ up to a non relevant constant $C$. 
\begin{proof}[Proof of Theorem \ref{th2}]  
Let us set $ c_N^{-1}:= \int_0^\infty x^N e^{-x} \frac{dx}{x} =(N-1) !$. 
By polarization, it suffices  to prove that for every function $f\in L^2$
\begin{equation}\label{eq2.1}
 \lim_{N\rightarrow \infty} c_N\int_0^\infty \phi(s^{-1}) \langle ((N-1)sH)^N e^{-s(N-1)H} f,f\rangle \frac{ds}{s} = \langle \phi(H)f,f\rangle.
 \end{equation}
We have 
\begin{align} 
& c_N\int_0^\infty \phi(s^{-1}) \langle ((N-1)sH)^N e^{-s(N-1)H} f,f\rangle \frac{ds}{s}\\
  & \hspace{2cm} = c_N \int_0^\infty\int_0^\infty \phi(s^{-1}) ((N-1) s\lambda)^N e^{-(N-1)s \lambda} \langle dE_\lambda f,f\rangle \frac{ds}{s} \nonumber \\
& \hspace{2cm} = \int_0^\infty \mu_N(\lambda) \langle dE_\lambda f,f\rangle, \label{eq:mun}
\end{align}
where 
$$ \mu_N(\lambda) := c_N \int_0^\infty  \phi(s^{-1}) ((N-1)s\lambda)^N e^{-(N-1)s\lambda} \frac{ds}{s}.$$
Due to the constant $c_N$, it is clear  that the continuous function $\mu_N$ is  bounded by $\|\phi\|_{L^\infty}$.
Therefore, it is enough to prove that $\mu_N(\lambda)$  converges to the function $\phi(\lambda)$ for all $\lambda > 0$ and then conclude by the dominated convergence theorem. \\
Taking the difference yields
\begin{align*}
\left|\mu_N(\lambda)-\phi(\lambda)\right| & = c_N \left|\int_0^\infty  \left[\phi(s^{-1})-\phi(\lambda)\right] ((N-1)s\lambda)^N e^{-(N-1)s\lambda} \frac{ds}{s}\right| \\
& \leq  \int_0^\infty  \left|\phi(s^{-1})-\phi(\lambda)\right| c_N ((N-1)s\lambda)^N e^{-(N-1)s\lambda} \frac{ds}{s},
\end{align*}
Using Stirling's formula
$$ c_N=\left[(N-1) !\right]^{-1} \simeq \left(\frac{e}{N-1}\right)^{N-1} (2\pi N)^{-\frac{1}{2}}$$
 we obtain for large enough $N$ and uniformly with respect to $\lambda$ 
\begin{align*}
\left|\mu_N(\lambda)-\phi(\lambda)\right| & \lesssim  N^{\frac{1}{2}}\int_0^\infty  \left|\phi(s^{-1})-\phi(\lambda)\right|  (s\lambda)^N e^{-(N-1)(s\lambda-1)} \frac{ds}{s} \\
 & \lesssim  N^{\frac{1}{2}}\int_0^\infty  \left|\phi(s^{-1})-\phi(\lambda)\right|  (s\lambda e^{-(s\lambda-1)})^{N-1} \lambda ds.
\end{align*}
Next we  decompose the integral for $s\lambda\leq  u_N$, $u_N<s\lambda <v_N$ and $s\lambda \geq v_N$, obtaining three terms $I$, $II$ and $III$ and where $u_N<1$ and $v_N>1$ will be suitably chosen later (around $1$).
For the first term, we have (since $x\rightarrow xe^{-(x-1)}$ is non-decreasing for $x\in(0,1)$)
\begin{align*}
 I & \lesssim  N^{\frac{1}{2}} (u_Ne^{-(u_N-1)})^{N-1} \lambda \|\phi\|_{L^\infty} \left(\int_{s\lambda\leq u_N}   ds\right)  \\
  & \lesssim N^{\frac{1}{2}} (u_Ne^{-(u_N-1)})^{N-1}  \|\phi\|_{L^\infty}.
\end{align*}
For the third term; we similarly have (since $x\rightarrow xe^{-(x-1)}$ is decreasing for $x\in(1,\infty)$)
\begin{align*}
 III & \lesssim  N^{\frac{1}{2}} (v_Ne^{-(v_N-1)})^{N-2} \left(\int_{s\lambda\geq v_N} (s\lambda) e^{-(s\lambda-1)} \lambda ds \right)  \|\phi\|_{L^\infty} \\
  & \lesssim N^{\frac{1}{2}} (v_Ne^{-(v_N-1)})^{N-2}  \|\phi\|_{L^\infty}.
\end{align*}
About the second term, using $\omega$ the uniform modulus of continuity of $\phi$ (and that $xe^{-(x-1)}\leq 1$ for every $x>0$),  it follows that 
\begin{align*}
 II & \lesssim  N^{\frac{1}{2}} \int_{u_N\leq s \lambda \leq v_N} \omega(\frac{v_N-u_N}{s}) \lambda ds \\
  & \lesssim N^{\frac{1}{2}} \omega(2\lambda(v_N-u_N))|v_N-u_N|,
\end{align*}
where we used that $u_N,v_N$ are around the value $1$.
Finally, we deduce that for every $\lambda$
\begin{align*}
 \left|\mu_N(\lambda)-\phi(\lambda)\right| & \lesssim N^{\frac{1}{2}} (u_Ne^{-(u_N-1)})^{N-1} \\
 & +N^{\frac{1}{2}} (v_Ne^{-(v_N-1)})^{N-2}+N^{\frac{1}{2}} \omega(\lambda(v_N-u_N))|v_N-u_N|.
\end{align*}

Now, let us write 
$$ u_N:= 1- \epsilon_N \qquad \textrm{and} \qquad v_N:= 1+ \epsilon_N,$$
with $\epsilon_N \to 0$ as $N \to \infty$.
We note that by a second order expansion, 
\begin{align*}
 \log\left(N^{\frac{1}{2}}\left(u_Ne^{-(u_N-1)}\right)^{N-1}\right) & = \frac{1}{2}\log(N) + (N-1)\left[\log(u_N)-(u_N-1)\right]  \\
 & = \frac{1}{2}\log(N) - (N-1)\left[\frac{1}{2}(u_N-1)^2 + {\mathcal O}(u_N-1)^3\right] \\
 & = \frac{1}{2}\log(N) - (N-1)\left[\frac{1}{2} \epsilon_N^2 + {\mathcal O}(\epsilon_N^3)\right] \\
 &   \xrightarrow[N\to \infty]{} -\infty
 \end{align*}
 provided 
\begin{equation} \lim_{N\to \infty} \frac{N}{\log(N)}\epsilon_N^2 = \infty. \label{eq:lim1} \end{equation}
In this case,  
$$ \lim_{N\to \infty} N^{\frac{1}{2}} (u_Ne^{-(u_N-1)})^{N-1} = 0.$$
Similarly , we have
$$ \lim_{N\to \infty} N^{\frac{1}{2}} (v_Ne^{-(v_N-1)})^{N-2} = 0,$$
and moreover since $\omega$ tends to $0$ at $0$ (with an order $\rho\in(0,1]$: $\omega(s)\lesssim s^\rho$), we can choose $\epsilon_N$ such that 
\begin{equation} \lim_{N\to \infty} N^{\frac{1}{2}} |v_N-u_N|\omega(2\lambda(v_N-u_N)) \lesssim \lim_{N\to \infty} N^{\frac{1}{2}}\epsilon_N \omega(4\lambda\epsilon_N) = 0. \label{eq:lim2}
 \end{equation}
Indeed, take $\epsilon_N$ such that 
$$ N\epsilon_N^2 = \log(N)^2,$$
(which is possible for large enough integer $N$) then $\epsilon_N$ tends to $0$ and (\ref{eq:lim1}) is satisfied. Moreover (\ref{eq:lim2}) follows from $\omega(s)\lesssim s^\rho$, due to the $\rho$-H\"older regularity of $\phi$.
For such $\epsilon_N$, we finally conclude to 
$$ \lim_{N\to \infty}  \left|\mu_N(\lambda)-\phi(\lambda)\right| = 0.$$
Then, using dominated convergence Theorem and then spectral theory in (\ref{eq:mun}) implies (\ref{eq2.1}). 
\end{proof}

\begin{cor} \label{cor:s}  
Let $\delta\in(0,1]$.  For every smooth function $\phi$ and every $L^2$-functions $f,g$, we have
\begin{align*}
&\int_0^\infty \phi(\lambda) \langle dE_H(\lambda) f,g\rangle d\lambda\\
 & = \lim_{N\to \infty} \frac{1}{(N-1)!}  \int_0^\infty \phi(\lambda)\langle ((N-\delta)\lambda^{-1}H)^N e^{-\lambda^{-1}(N-\delta)H}f,g\rangle \frac{d\lambda}{\lambda} \\
 & = \lim_{N\to \infty} \frac{1}{\Gamma(N+\delta)} \int_0^\infty \phi(\lambda)\langle (N\lambda^{-1}H)^{N+\delta} e^{-\lambda^{-1}NH}f,g\rangle \frac{d\lambda}{\lambda}. 
\end{align*}
\end{cor}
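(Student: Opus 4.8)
The plan is to reduce both displayed limits to the concentration statement already established in Theorem \ref{th2}. First I would record that, by the spectral theorem and the standing assumption that $H$ has (absolutely) continuous spectrum, the left-hand side is nothing but $\langle \phi(H)f,g\rangle$, since $\langle dE_H(\la)f,g\rangle\,d\la = d\langle E_H(\la)f,g\rangle$ and the latter is a complex measure of finite total variation (bounded by $\|f\|_2\|g\|_2$ via Cauchy--Schwarz). Thus it suffices to show that each of the two limits on the right equals $\langle\phi(H)f,g\rangle$. Here I would take $\phi$ bounded (e.g.\ smooth and compactly supported), so that it is bounded and Lipschitz and hence meets the hypotheses of Theorem \ref{th2} with $\rho=1$; boundedness is what makes the multipliers below uniformly bounded and legitimizes the dominated convergence at the end.

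For the first equality I would perform the change of variables $s=\la^{-1}$ in the $\la$-integral. Since $\frac{d\la}{\la}=-\frac{ds}{s}$ and the orientation reverses, the first right-hand side becomes exactly
$$\lim_{N\to\infty}\frac{1}{(N-1)!}\int_0^\infty \phi(s^{-1})\langle((N-\delta)sH)^N e^{-s(N-\delta)H}f,g\rangle\frac{ds}{s},$$
which is literally the expression of Theorem \ref{th2} with the factor $(N-1)$ replaced by $(N-\delta)$. Inspecting that proof, the value $N-1$ enters only as the rate in the Poisson-type kernel and through Stirling in $c_N=1/(N-1)!$. Extracting the multiplier by Fubini and substituting $x=(N-\delta)s\eta$ (with $\eta$ the spectral variable) gives
$$\mu_N^{(\delta)}(\eta)=\int_0^\infty \phi\!\left(\tfrac{(N-\delta)\eta}{x}\right) d\gamma_N(x),\qquad d\gamma_N(x)=\tfrac{1}{(N-1)!}x^{N-1}e^{-x}\,dx,$$
an average of $\phi$ against the $\mathrm{Gamma}(N)$ law; the exact normalization $\int d\gamma_N=1$ yields $\|\mu_N^{(\delta)}\|_\infty\le\|\phi\|_\infty$ at once. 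Pointwise convergence $\mu_N^{(\delta)}(\eta)\to\phi(\eta)$ then follows from the same three-region (Laplace/concentration) estimate as in Theorem \ref{th2}: the mass of $\gamma_N$ concentrates at $x\simeq N$ with fluctuation $\sqrt N$, so $(N-\delta)\eta/x\to\eta$, and the replacement of $N-1$ by $N-\delta$ only shifts the peak by an $O(1)$ amount, absorbed by the Gaussian width $\sqrt N$ (the choice $N\epsilon_N^2=(\log N)^2$ still gives $N\epsilon_N^2/\log N\to\infty$).

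For the second equality I would again substitute $s=\la^{-1}$ to rewrite the third line as $\lim_N \Gamma(N+\delta)^{-1}\int_0^\infty \phi(s^{-1})\langle(NsH)^{N+\delta}e^{-sNH}f,g\rangle\frac{ds}{s}$, and compute its multiplier via $x=Ns\eta$:
$$\nu_N^{(\delta)}(\eta)=\int_0^\infty \phi\!\left(\tfrac{N\eta}{x}\right) d\gamma_{N+\delta}(x),\qquad d\gamma_{N+\delta}(x)=\tfrac{1}{\Gamma(N+\delta)}x^{N+\delta-1}e^{-x}\,dx.$$
Here the identity $\int_0^\infty (Ns\eta)^{N+\delta}e^{-Ns\eta}\frac{ds}{s}=\Gamma(N+\delta)$ gives $\|\nu_N^{(\delta)}\|_\infty\le\|\phi\|_\infty$, and the $\mathrm{Gamma}(N+\delta)$ law concentrates at $x\simeq N+\delta$, so $N\eta/x\to\eta$; the same concentration argument gives $\nu_N^{(\delta)}(\eta)\to\phi(\eta)$ pointwise. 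In both cases I would close the argument by dominated convergence applied to $\int_0^\infty \mu_N^{(\delta)}(\eta)\,d\langle E_H(\eta)f,g\rangle$ (resp.\ with $\nu_N^{(\delta)}$), the uniform bound $\|\phi\|_\infty$ against the finite measure $|d\langle E_H(\eta)f,g\rangle|$ serving as dominating function.

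The main obstacle is the pointwise convergence of the multipliers $\mu_N^{(\delta)}$ and $\nu_N^{(\delta)}$, that is, verifying that the Laplace-type concentration of Theorem \ref{th2} is genuinely insensitive to the $O(1)$ parameter shifts ($N-\delta$ in place of $N-1$, and shape $N+\delta$ with scale $N$ in place of shape $N$ with scale $N-1$). This is conceptually the same computation as in Theorem \ref{th2}, so the difficulty is careful bookkeeping---checking that the Stirling ratio $\Gamma(N+\delta)/(N-1)!$ and the shifted peaks do not spoil the three-region estimate---rather than a new idea; the remaining steps (Fubini to extract the multiplier, normalization for the uniform bound, dominated convergence for the conclusion) are routine.
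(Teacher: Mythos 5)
Your proposal is correct and follows essentially the same route as the paper: the authors dispose of the corollary by saying it is Theorem \ref{th2} again with the rate $N-1$ replaced by $N-\delta$ (resp.\ the exponent $N$ by $N+\delta$) and $(N-1)!$ by the corresponding Gamma value, which is exactly the multiplier/Gamma-concentration argument you spell out. Your version merely makes explicit the probabilistic normalization and the check that the $O(1)$ parameter shifts do not disturb the three-region Laplace estimate, which the paper leaves implicit.
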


\begin{proof} The case $\delta=1$ is exactly the statement of Theorem \ref{th2}. For  $\delta\in(0,1)$, we follow  the same proof  and replace $(N-1)!=\Gamma(N)$ by  $\Gamma(N-\delta+1)$ (we recall that Stirling's formula remains valid for  the $\Gamma$ function, see (\ref{Stir}) below).
 \end{proof}

Making an integration by parts in Theorem \ref{th2}, we obtain a formula  for the derivatives $\frac{d^k}{d \la^k} dE_H(\la)$ in terms of the semigroup.  That is
\begin{cor}\label{cor2}
The following equality holds in the weak sense: for an integer $k\geq 1$ 
$$  \frac{d^k}{d \la^k} dE_H(\la) = \lim_N \frac{1}{N!}  \frac{d^k}{d \la^k} \left[ \la^{-1} (N \la^{-1}H)^{N+1} e^{-N \la^{-1}H} \right]$$
for  $\la > 0$.
\end{cor}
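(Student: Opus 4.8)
The plan is to deduce the statement directly from Corollary~\ref{cor:s} using the elementary fact that distributional differentiation commutes with weak limits. The point is that all $k$ derivatives can be transferred onto the test function by integration by parts, so one never needs to establish any strong convergence of the derivatives themselves; the weak convergence already provided by Corollary~\ref{cor:s} is automatically inherited after differentiation.

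First I would fix the class of test functions. Since the assertion is to be read in the weak sense, I test both sides against an arbitrary $\phi \in C_c^\infty(0,\infty)$ and arbitrary $f,g\in L^2$. Writing
$$K_N(\lambda) := \frac{1}{N!}\,\lambda^{-1}\,(N\lambda^{-1}H)^{N+1}\,e^{-N\lambda^{-1}H},$$
the case $\delta=1$ of Corollary~\ref{cor:s} (with $\Gamma(N+1)=N!$ and $\frac{d\lambda}{\lambda}=\lambda^{-1}\,d\lambda$) reads
$$\int_0^\infty \psi(\lambda)\,\langle dE_H(\lambda)f,g\rangle\,d\lambda = \lim_{N\to\infty}\int_0^\infty \psi(\lambda)\,\langle K_N(\lambda)f,g\rangle\,d\lambda$$
for every smooth $\psi$; in particular it applies with $\psi=\phi^{(k)}$, which is again smooth and compactly supported in $(0,\infty)$.

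Next I would perform the two integrations by parts. The scalar function $G_N(\lambda):=\langle K_N(\lambda)f,g\rangle$ is smooth on $(0,\infty)$, and because $\phi$ has compact support in $(0,\infty)$ all boundary terms vanish, giving
$$\int_0^\infty \phi(\lambda)\,G_N^{(k)}(\lambda)\,d\lambda = (-1)^k\int_0^\infty \phi^{(k)}(\lambda)\,G_N(\lambda)\,d\lambda.$$
By the definition of the weak $k$-th derivative, testing $\frac{d^k}{d\lambda^k}dE_H(\lambda)$ against $\phi$ produces $(-1)^k\int_0^\infty \phi^{(k)}(\lambda)\langle dE_H(\lambda)f,g\rangle\,d\lambda$. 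Applying the displayed identity from Corollary~\ref{cor:s} with $\psi=\phi^{(k)}$ to this last integral, and then undoing the integration by parts on the right-hand side, yields
$$(-1)^k\int_0^\infty \phi^{(k)}(\lambda)\,\langle dE_H(\lambda)f,g\rangle\,d\lambda = \lim_{N\to\infty}\int_0^\infty \phi(\lambda)\,G_N^{(k)}(\lambda)\,d\lambda,$$
which is precisely the claimed identity tested against $\phi$, $f$ and $g$. Since these are arbitrary, the corollary follows.

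The only point requiring care, and what I would regard as the (mild) main obstacle, is the vanishing of the boundary terms in the integration by parts, i.e.\ staying within the admissible class. Restricting to $\phi\in C_c^\infty(0,\infty)$ settles this cleanly, since then $\phi$ and all its derivatives vanish near $0$ and near $\infty$. I would note that the same conclusion persists for rapidly decaying $\phi$ on $(0,\infty)$, because $G_N$ and its derivatives decay at both endpoints: the factor $e^{-N\lambda^{-1}H}$ forces rapid decay as $\lambda\to 0$, while the negative power of $\lambda$ forces decay as $\lambda\to\infty$. The compactly supported class, however, already suffices to justify the weak statement.
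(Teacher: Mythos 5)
Your proposal is correct and is essentially the argument the paper intends: the paper's one-line justification (``making an integration by parts in Theorem~\ref{th2}'') is exactly your step of applying Corollary~\ref{cor:s} with the test function $\phi^{(k)}$ and transferring the $k$ derivatives back by integration by parts, with boundary terms killed by the compact support of $\phi$ in $(0,\infty)$. You have simply written out the details the paper omits, including the correct verification that $\phi^{(k)}$ stays in the admissible class for Theorem~\ref{th2}.
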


\begin{proof}[Proof of Theorem \ref{th1}] 
We first prove that $1)$ implies $2)$.  Suppose that (\ref{Rest}) is satisfied. For fixed $N$ we have 
\begin{align*}
\|Â H^N e^{-tH}  \|_{L^p \to L^{p'}} &= \left\|  \int_0^\infty \lambda^N e^{-t\lambda} dE_H(\lambda)  \right\|_{L^p \to L^{p'}}\\
&\lesssim \int_0^\infty \lambda^N e^{-t\lambda} \la^{\frac{d}{2}(\frac{1}{p} - \frac{1}{p'}) - 1} d\la\\
&= \left(\int_0^\infty u^{N + \frac{d}{2}(\frac{1}{p} - \frac{1}{p'}) -1} e^{-u}  du\right) t^{ -N - \frac{d}{2}(\frac{1}{p} - \frac{1}{p'})}\\
&= t^{ -N - \frac{d}{2}(\frac{1}{p} - \frac{1}{p'})} \Gamma(N + \frac{d}{2}(\frac{1}{p} - \frac{1}{p'})).
\end{align*}
Stirling's formula for the Gamma function (see \cite[Appendix A.6]{Gra})
\begin{equation} \label{Stir}
\Gamma(x) \simeq x^{x - \frac{1}{2}} e^{-x} \sqrt{2 \pi} \, \, \,   {\rm for } \, x>0,
\end{equation}
shows that 
$$\|Â H^N e^{-tH}  \|_{L^p \to L^{p'}}  \lesssim (N-1)! N^{\frac{d}{2}(\frac{1}{p} - \frac{1}{p'})} t^{-N - \frac{d}{2}(\frac{1}{p} - \frac{1}{p'})}.$$
This proves assertion $2)$. 

We now prove that $2)$ implies $3)$.  Suppose first that $F$ is  a H\"older continuous function with support in $[0, R]$. We apply Theorem \ref{th2} and obtain
\begin{align*}
\| F(H) \|_{L^p \to L^{p'}} &\le \limsup_N \frac{1}{(N-1)!} \int_{1/R}^\infty  F(s^{-1}) \| ((N-1)sH)^Ne^{-s (N-1)H} \|_{p-p'}  \frac{ds}{s}\\
& \lesssim  \limsup_N \int_{1/R}^\infty F(s^{-1}) ((N-1)s)^N   N^{\frac{d}{2}(\frac{1}{p} - \frac{1}{p'})} 
(s (N-1))^{-N - \frac{d}{2}(\frac{1}{p} - \frac{1}{p'})}  \frac{ds}{s}\\
& \lesssim \int_{1/R}^\infty F(s^{-1}) s^{- \frac{d}{2}(\frac{1}{p} - \frac{1}{p'})}  \frac{ds}{s}\\
&\le R^{\frac{d}{2}(\frac{1}{p} - \frac{1}{p'})} \int_{0}^R  F(s) \frac{ds}{s}.
\end{align*}
We have proved that 
\begin{equation}\label{eq2.2}
\| F(H) \|_{L^p \to L^{p'}}  \lesssim R^{\frac{d}{2}(\frac{1}{p} - \frac{1}{p'})} \int_{0}^R  F(s) \frac{ds}{s}.
\end{equation}
Next we  extend the latter  estimate  to all bounded functions $F$ with support in $[0,R]$.
 This can be achieved  by classical  approximation arguments. First assume that the support of $F$ is contained 
in $[\eta, R]$ for some $\eta > 0$ and apply (\ref{eq2.2}) to the convolution  $F_\epsilon = \rho_\epsilon * F$ 
by a mollifier  $\rho_\epsilon$. We  obtain 
$$\| F_\epsilon(H) \|_{L^p \to L^{p'}}  \lesssim (R+ \epsilon)^{ \frac{d}{2}(\frac{1}{p} - \frac{1}{p'})} \int_{\eta-\epsilon}^R  F_\epsilon(s) \frac{ds}{s}.$$
Since $| F_\epsilon(s)| \le \| F \|_\infty$  and  the support of $F_\epsilon$ is contained in $[\eta/2, R+ \eta/2]$ for $\epsilon < \eta/2$
one can apply  the dominated convergence theorem to the RHS of the previous inequality. We obtain (\ref{eq2.2}). Now for every bounded $F$ with support in $[0, R]$ we can apply (\ref{eq2.2}) to $\chi_{[\epsilon, R]} F$ and then let $\epsilon \to 0$. Assertion $3)$ is then proved. 

Finally we prove that $3)$ implies $1)$. In order to do this, we fix $\lambda > 0$ and $\epsilon \in (0, \la)$, and  apply $3)$ to 
$F(s) = \chi_{(\la- \epsilon, \la + \epsilon]}(s)$. It follows that 
\begin{align*}
\| \chi_{(\la- \epsilon, \la + \epsilon]}(H)  \|_{L^p \to L^{p'}} &\lesssim (\la + \epsilon)^{\frac{d}{2}(\frac{1}{p} - \frac{1}{p'})} \int_{\la - \epsilon}^{\la + \epsilon} \frac{ds}{s}\\
&= (\la + \epsilon)^{\frac{d}{2}(\frac{1}{p} - \frac{1}{p'})} [ \ln(\la + \epsilon) - \ln(\la - \epsilon)]\\
&\simeq (\la + \epsilon)^{\frac{d}{2}(\frac{1}{p} - \frac{1}{p'})} \frac{2 \epsilon}{\la}.
\end{align*} 
Hence 
$$\| \epsilon^{-1} \chi_{(\la- \epsilon, \la + \epsilon]}(H)  \|_{L^p \to L^{p'}}  \lesssim  (\la + \epsilon)^{\frac{d}{2}(\frac{1}{p} - \frac{1}{p'})} \la^{-1}.$$
We let $\epsilon \to 0$ and obtain assertion $1)$. 

\end{proof} 

\begin{rem} \ \\
\begin{itemize} 
 \item  In the proof of $1) \Rightarrow 2)$ we can take $N = 0$ and obtain that $1)$ implies 
$$ \| e^{-tH} \|_{L^p \to L^{p'}} \lesssim t^{-\frac{d}{2}(\frac{1}{p} - \frac{1}{p'})}, \, t > 0.$$
\item  Suppose that (\ref{Rest}) holds. Let $\alpha > 0$ and apply assertion $3)$ with $F(s^\alpha)$ to obtain
$$ \|Â F(H^\alpha) \|_{L^p \to L^{p'}} \lesssim R^{\frac{d}{2\alpha} (\frac{1}{p} - \frac{1}{p'})} \int_\R |F(s^\alpha)| \frac{ds}{s} = 
\alpha R^{\frac{ d}{2\alpha} (\frac{1}{p} - \frac{1}{p'})} \int_\R |F(s)| \frac{ds}{s}.$$ 
We conclude by Theorem \ref{th1} that 
$$\|Â dE_{H^\alpha}(\lambda) \|_{L^p \to L^{p'}} \le C  \la^{\frac{ d}{2\alpha } (\frac{1}{p} - \frac{1}{p'}) -1}, \, \la > 0.$$
In particular, for $\alpha=\frac{1}{2}$
$$ \|Â dE_{\sqrt{H}}(\lambda) \|_{L^p \to L^{p'}} \le C \la^{d (\frac{1}{p} - \frac{1}{p'}) -1}.$$
\item Assume that the heat semigroup $(e^{-tH})_{t>0}$ satisfies the classical  $L^p-L^{2}$ estimates $\|e^{-tH}\|_{L^p \to L^{2}} \lesssim t^{-\frac{d}{2}\left(\frac{1}{p}-\frac{1}{2}\right)}$ for every $t>0$ and some $p\in[1,2]$. Then we observe that for every integer $N\geq 3$
\begin{align*}
 \| H^N e^{-tH} \|_{L^p\to L^{p'}} & \leq   \| e^{-\frac{t}{N} H}\|_{L^2 \to L^{p'}}  \| H^N e^{-t(1-\frac{2}{N})H}\|_{L^2 \to L^2} \| e^{-\frac{t}{N} H}\|_{L^p \to L^2}  \\
 & \lesssim \left(\frac{t}{N}\right)^{-\frac{d}{2}\left(\frac{1}{p}-\frac{1}{p'}\right)} \left(\frac{N}{t(1-\frac{2}{N})}\right)^N e^{-N} \\
& \lesssim  t^{-N-\frac{d}{2}\left(\frac{1}{p}-\frac{1}{p'}\right)} N^{\frac{d}{2}\left(\frac{1}{p}-\frac{1}{p'}\right)} (Ne^{-1})^{N} \\
& \lesssim t^{-N-\frac{d}{2}\left(\frac{1}{p}-\frac{1}{p'}\right)} N^{\frac{d}{2}\left(\frac{1}{p}-\frac{1}{p'}\right)} (N-1)! \sqrt{N},
\end{align*}
where we used Stirling's formula to obtain the last inequality.  Therefore we see  that the gap between this very general estimate with the one required in Theorem \ref{th1} is an extra term of order $N^{\frac{1}{2}}$.
\end{itemize}
\end{rem}

\section{Restriction from dispersion }

In this section we show that dispersive estimates for the semigroup generated by $H$ imply restriction estimates and also $L^p-L^{p'}$ estimates for the derivatives 
$\frac{d^k}{d \la^k} dE_H(\la)$ up to some order.  The result  for the case $k = 0$ was already derived in \cite{COSY} by a different proof. The result for $k \ge 1$ seems to be new. 
 
\begin{prop} \label{prop2} Fix $1\leq p<\frac{2d}{d+2}=2_*$. Suppose  that the semigroup $(e^{-zH})_{z\in{\mathbb C}^+}$ satisfies  the following dispersive  estimates: 
\begin{equation}\label{disp-p}
 \|e^{-zH}\|_{L^p \to L^{p'}} \lesssim |z|^{-\frac{d}{2}\left(\frac{1}{p}-\frac{1}{p'} \right)}, 
 \end{equation}
uniformly in  $z$ such that  $\Re(z) > 0$. Then, for $\gamma>0$ 
$$ \|(NsH)^{N+\gamma} e^{-NsH}\|_{L^{p} \to L^{p'}} \lesssim s^{-\frac{d}{2}\left(\frac{1}{p}-\frac{1}{p'}\right)} N^\gamma (N-1)!,$$
 uniformly in  $N\geq 1$ and $s>0$ (the implicit constant only depends on $\gamma>0$). 
\end{prop}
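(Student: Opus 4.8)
The plan is to represent the operator $(NsH)^{N+\gamma}e^{-NsH}$ by means of the holomorphic semigroup and then to pick a good complex contour on which the dispersive bound (\ref{disp-p}) can be exploited. First I would reduce to the case $s=1$ by the natural scaling: writing $t=Ns$, the operator becomes $(tH)^{N+\gamma}e^{-tH}$ up to the factor $N^{N+\gamma}$, so it suffices to control $\|(tH)^{N+\gamma}e^{-tH}\|_{L^p\to L^{p'}}$ and then re-insert the powers of $N$. The key analytic identity I would use is the subordination/Cauchy-type representation that expresses $H^{N+\gamma}e^{-tH}$ as an integral of $e^{-zH}$ against a kernel. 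For integer powers one has $H^m e^{-tH}=\frac{(-1)^m m!}{2\pi i}\oint (z-t)^{-m-1}e^{-zH}\,dz$, and for the fractional exponent $N+\gamma$ I would instead write
\begin{equation*}
H^{N+\gamma}e^{-tH}=\frac{1}{\Gamma(-N-\gamma)}\int_0^\infty r^{-N-\gamma-1}\bigl(e^{-(t+r)H}-\text{(Taylor terms)}\bigr)\,dr,
\end{equation*}
or, more robustly, deform the Cauchy contour into the right half-plane where (\ref{disp-p}) is available.

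The cleanest route is probably the contour-integral one. I would start from the Gamma-function representation
\begin{equation*}
\lambda^{N+\gamma}e^{-t\lambda}=\frac{1}{\Gamma(N+\gamma)}\,t^{-N-\gamma}\int_0^\infty \Bigl(\frac{u}{t}\Bigr)^{?}\cdots
\end{equation*}
but a scalar computation of this type only shifts the combinatorics; the operator content must come from (\ref{disp-p}). So instead I would use the representation of $z^{N+\gamma}e^{-z}$ via its Mellin/Laplace transform and apply it at the operator level: for $\Re z>0$ the function $\zeta\mapsto \zeta^{N+\gamma}e^{-\zeta}$ has a saddle point at $\zeta=N+\gamma$, and Cauchy's theorem lets me write $(tH)^{N+\gamma}e^{-tH}$ as an integral of $e^{-zH}$ over a contour passing through the saddle at $z\approx (N+\gamma)/H$. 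Substituting (\ref{disp-p}) turns the operator norm into $\int_{\text{contour}} |z|^{-\frac{d}{2}(1/p-1/p')}\,|(\text{saddle kernel})|\,|dz|$, and the scalar saddle-point estimate of $\int \zeta^{N+\gamma}e^{-\zeta}\,d\zeta$-type integrals produces exactly the factor $\Gamma(N+\gamma)$, which by Stirling's formula (\ref{Stir}) is comparable to $N^\gamma(N-1)!$.

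Concretely, the steps in order are: (i) scale to $t=Ns$ and isolate the target $\|(tH)^{N+\gamma}e^{-tH}\|_{p\to p'}$; (ii) write this operator as a contour integral $\frac{t^{N+\gamma}}{2\pi i}\int_\Gamma z^{N+\gamma}(\text{resolvent-free kernel})\,e^{-tzH}\,dz$ using holomorphy of $z\mapsto e^{-zH}$ on $\mathbb{C}^+$; (iii) choose $\Gamma$ to be the steepest-descent contour through the saddle $z=1$ (after normalizing $t$), staying in $\{\Re z>0\}$ so that (\ref{disp-p}) applies with $|z|$ bounded below on the relevant part; (iv) bound the operator norm by $\int_\Gamma |z|^{N+\gamma-\frac{d}{2}(1/p-1/p')}e^{-t\Re(z)\cdot 0}\cdots$, reducing everything to a scalar Laplace-type integral; (v) evaluate that scalar integral by the standard $\Gamma(N+\gamma)\simeq N^{\gamma}(N-1)!$ asymptotics. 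The main obstacle will be step (iii)–(iv): I must verify that the dispersive estimate, which only gives the polynomial decay $|z|^{-\frac{d}{2}(1/p-1/p')}$ and no exponential gain in $\Re z$, is nonetheless enough to make the contour integral converge and concentrate near the saddle. The condition $p<2_*=\frac{2d}{d+2}$, equivalently $\frac{d}{2}(1/p-1/p')>1$, is exactly what guarantees integrability of $|z|^{-\frac{d}{2}(1/p-1/p')}$ against the exponential weight near the endpoints of $\Gamma$, so I expect this hypothesis to enter precisely at the convergence step. Balancing the contour so that the $N$-dependent growth $|z|^{N+\gamma}$ is tamed by $e^{-N|z|}$ while the weak dispersive decay still controls the tails is the delicate part, and it is where I would spend most of the effort.
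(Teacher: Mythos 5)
Your overall plan --- represent $(NsH)^{N+\gamma}e^{-NsH}$ through the holomorphic semigroup by a Cauchy-type contour integral, insert the dispersive bound, and recover $\Gamma(N+\gamma)\simeq N^{\gamma}(N-1)!$ by Stirling --- is indeed the direction the paper takes. But as written the proposal has a genuine gap: the one quantitative step that makes the proposition true is never carried out, and your description of where the hypothesis $p<2_*$ enters points you toward the wrong place. The paper's proof takes $\Gamma_N$ to be the circle of center $N$ and radius $N-1$, writes the integer power via Cauchy's formula
$(NsH)^{N+\gamma}e^{-NsH}=N!\,N^{N}(sNH)^{\gamma}\int_{\Gamma_N}(\zeta-N)^{-N-1}e^{-\zeta sH}\,d\zeta$,
handles the fractional power $(sH)^{\gamma}e^{-\zeta sH}$ by a separate subordination formula $(sH)^{\gamma}e^{-\zeta sH}=c\,s^{\gamma}\int_0^\infty (tH)^k e^{-(t+s\zeta)H}\,t^{-1-\gamma}\,dt$, and then must show that
$\int_0^{2\pi}\bigl|N+(N-1)e^{i\theta}\bigr|^{-\sigma}\,d\theta\lesssim N^{-1}$ with $\sigma=\tfrac{d}{2}(\tfrac1p-\tfrac1{p'})$.
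This last integral is the crux: the prefactor from Cauchy's formula is of size $N!\,N^{\gamma}$, i.e.\ a full factor of $N$ \emph{too large}, and the required gain of $N^{-1}$ comes precisely from the fact that the contour passes close to the origin (near $\zeta=1$, at $\theta=\pi$), where the dispersive decay $|\zeta|^{-\sigma}$ is concentrated; the condition $\sigma>1$ (equivalently $p<2_*$) is exactly what makes that angular integral $O(N^{-1})$ rather than $O(N^{-\sigma})$ or $O(1)$. Your sketch instead proposes to keep $|z|$ \emph{bounded below} on the relevant part of the contour and attributes $p<2_*$ to "integrability near the endpoints", which would forfeit the very mechanism that converts $N!$ into $(N-1)!$. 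Without that gain you land on the bound $N!\,N^{\gamma}\simeq N\cdot(N-1)!\,N^{\gamma}$ (or, via generic analyticity estimates, the $(N-1)!\,N^{\gamma}\sqrt{N}$ that the paper's own remark identifies as the generic obstruction), which is strictly weaker than the claim.

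Two further concrete issues. First, several of your displayed formulas are placeholders (a "?" exponent, unfinished "(Taylor terms)", a saddle "at $z\approx(N+\gamma)/H$" which is not a meaningful operator expression), so the representation you would actually integrate against $e^{-zH}$ is never pinned down; in particular you need a genuine device for the \emph{non-integer} exponent $N+\gamma$ (the paper splits it as an integer Cauchy formula times a $\gamma$-subordination integral, and the latter requires the extension of (\ref{disp-p}) to $\|(\Re(z)H)^k e^{-zH}\|_{L^p\to L^{p'}}$ via a second application of Cauchy's formula). Second, the saddle-point heuristic "the scalar estimate produces exactly $\Gamma(N+\gamma)$" is precisely the statement to be proved, not an input: the whole difficulty is that the dispersive estimate gives only polynomial decay in $|z|$ and no exponential damping, so one cannot localize to a neighborhood of the saddle for free. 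You correctly flag this as "the delicate part where I would spend most of the effort", but that effort is the proof; until the contour is fixed and the resulting scalar integral is shown to be $O(N^{-1})$ times the circumference-normalized prefactor, the argument is incomplete.
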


\begin{rem} \label{rem} It follows from Cauchy's formula that (\ref{disp-p})  extends  to the derivatives of the semigroup as follows: for $k\geq 1$ and every $z\in{\mathbb C}^+$
$$ \|(\Re(z)H)^k e^{-zH}\|_{L^p \to L^{p'}} \lesssim |z|^{-\frac{n}{2}\left(\frac{1}{p}-\frac{1}{p'} \right)}.$$
\end{rem}

\begin{proof} By the functional calculus, we have
$$(NsH)^{N+\gamma} e^{-NsH} = N! N^N (sNH)^\gamma \int_{\Gamma_N} \frac{e^{-\zeta s H}}{(\zeta-N)^{N+1}} d\zeta,$$
where $\Gamma_N$ is the circle of center $N$ and of radius $N-1$. So we deduce that
\begin{align}
\|(NHs)^{N+\gamma} e^{-NsH}\|_{L^p \to L^{p'}} & \leq N^{N+\gamma} N!  \int_{\Gamma_N} \frac{\|(sH)^\gamma e^{-\zeta sH}\|_{L^p \to L^{p'}}}{|\zeta-N|^{N+1}} d\zeta \nonumber \\
& \leq N^{N+\gamma} N!  (N-1)^{-(N+1)} \int_{\Gamma_N} \|(sH)^\gamma e^{-\zeta sH}\|_{L^p \to L^{p'}} d\zeta \nonumber \\
& \lesssim N!  N^{\gamma} \int_{0}^{2\pi} \|(sH)^\gamma e^{-\zeta_\theta sH}\|_{L^p \to L^{p'}} d\theta, \label{eq:cauchy}
\end{align}
with $\zeta_\theta:=N+(N-1)e^{i\theta}$.
Writing (up to some numerical constant) with an integer $k\geq 1+\gamma$
$$ (sH)^\gamma e^{-\zeta sH} = s^\gamma \int_0^\infty   (tH)^k e^{-(t+s\zeta)H} \frac{dt}{t^{1+\gamma}}, $$
we deduce that (since $\gamma \in(0,k)$)
\begin{align*}
 \|(sH)^\gamma e^{-\zeta sH}\|_{L^p \to L^{p'}} & \lesssim s^\gamma \int_0^\infty  \frac{t^k}{(t+s\Re(\zeta))^k(|t+s\zeta|)^{\frac{d}{2}\left(\frac{1}{p}-\frac{1}{p'} \right)}} \frac{dt}{t^{1+\gamma}} \\
 & \lesssim s^\gamma (s|\zeta|)^{-\frac{d}{2}\left(\frac{1}{p}-\frac{1}{p'} \right)} \int_0^{\infty}  \left(\frac{t}{t+s\Re(\zeta)}\right)^k  \frac{dt}{t^{1+\gamma}} \\
  & \lesssim (s|\zeta|)^{-\frac{d}{2}\left(\frac{1}{p}-\frac{1}{p'} \right)} s^\gamma \left[ \int_0^{s\Re(\zeta)}  \left(\frac{t}{s\Re(\zeta)}\right)^k  \frac{dt}{t^{1+\gamma}} + \int_{s\Re(\zeta)}^\infty \frac{dt}{t^{1+\gamma}}\right] \\
   & \lesssim (s|\zeta|)^{-\frac{d}{2}\left(\frac{1}{p}-\frac{1}{p'} \right)} \Re(\zeta)^{-\gamma} \\
   & \lesssim (s|\zeta|)^{-\frac{d}{2}\left(\frac{1}{p}-\frac{1}{p'} \right)} 
 \end{align*}
 where we used that $\Re(\zeta)\geq 1$ and $|t+s\zeta| \geq |s\zeta|$.
Putting this estimate together with (\ref{eq:cauchy}) yields with $\sigma:=\frac{d}{2}\left(\frac{1}{p}-\frac{1}{p'} \right)$
\begin{align*}
& \|(NsH)^{N+\gamma} e^{-NsH}\|_{L^p \to L^{p'}}\\
 & \lesssim s^{-\sigma} N!  N^{\gamma} \int_{0}^{2\pi} \left|N+(N-1)e^{i\theta}\right|^{-\sigma} d\theta \\
& \lesssim  s^{-\sigma} N!  N^{\gamma} \int_{-1}^{1} \left( (N+(N-1)u)^2 +(N-1)^2(1-u^2)\right)^{-\sigma/2} \frac{du}{\sqrt{1-u^2}} \\
& \lesssim  s^{-\sigma} N!  N^{\gamma} \left[\int_{-1}^{0} \left(N^2(1+u)^2+(N-1)^2(1+u)\right)^{-\sigma/2} \frac{du}{\sqrt{1+u}} + \right. \\
 & \hspace{3cm}  \left. \int_{0}^{1} N^{-\sigma} \frac{du}{\sqrt{1-u}} \right] \\
& \lesssim  s^{-\sigma} N!  N^{\gamma} \left[\int_{0}^{(N-1)^2} (1+v)^{-\sigma/2} \frac{dv}{N\sqrt{v}} +N^{-\sigma}  \right] \\
& \lesssim  s^{-\sigma} N!  N^{\gamma-1}\simeq s^{-\sigma} (N-1) ! N^\gamma,
\end{align*}
where we used that $\sigma>1$ since $p<2_*$. 
\end{proof}

\begin{cor}  \label{cor1bis} 
Assume that the semigroup $(e^{-zH})_{z\in{\mathbb C}^+}$ satisfies the dispersive estimate (\ref{disp-p}) for some $1 \le p<\frac{2d}{d+2}=2_*$. Then we have 
$$ \| dE_{H}(\lambda) \|_{L^p \to L^{p'}} \lesssim \lambda^{\frac{d}{2}\left(\frac{1}{p}-\frac{1}{p'}\right)-1},  \,  \, \la > 0.$$
In addition for an integer $k \le d/2 -1$, if $1\leq p<\frac{2d}{d+2(k+1)}$ then 
$$ \left\| \frac{d^k}{d\la^k}  dE_{H}(\lambda) \right\|_{L^p \to L^{p'}} \lesssim \lambda^{\frac{d}{2}\left(\frac{1}{p}-\frac{1}{p'}\right)-(k+1)}, \, \,  \la > 0. $$
\end{cor}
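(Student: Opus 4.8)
Throughout write $\sigma:=\frac{d}{2}\left(\frac1p-\frac1{p'}\right)$, so the dispersive hypothesis is $\|e^{-zH}\|_{L^p\to L^{p'}}\lesssim|z|^{-\sigma}$ and the two target bounds are $\lambda^{\sigma-1}$ and $\lambda^{\sigma-(k+1)}$. Note that $p<2_*$ is $\sigma>1$, while $p<\frac{2d}{d+2(k+1)}$ is exactly $\sigma>k+1$. For the restriction estimate (the case $k=0$) the plan is simply to feed Proposition \ref{prop2} into the formula of Corollary \ref{cor2}. Applying Proposition \ref{prop2} with $\gamma=1$ and $s=\lambda^{-1}$ gives $\|(N\lambda^{-1}H)^{N+1}e^{-N\lambda^{-1}H}\|_{L^p\to L^{p'}}\lesssim\lambda^{\sigma}N!$, uniformly in $N\ge1$ and $\lambda>0$; multiplying by $\lambda^{-1}$ and dividing by $N!$, the operator $\frac1{N!}G_N(\lambda):=\frac1{N!}\lambda^{-1}(N\lambda^{-1}H)^{N+1}e^{-N\lambda^{-1}H}$ satisfies $\|\frac1{N!}G_N(\lambda)\|_{L^p\to L^{p'}}\lesssim\lambda^{\sigma-1}$ uniformly in $N$. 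Since $\frac1{N!}G_N(\lambda)\to dE_H(\lambda)$ weakly (Corollary \ref{cor2} with $k=0$, i.e.\ Theorem \ref{th2}), the lower semicontinuity of the operator norm under weak convergence yields $\|dE_H(\lambda)\|_{L^p\to L^{p'}}\lesssim\lambda^{\sigma-1}$.

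For the derivatives ($k\ge1$) I would again aim, via Corollary \ref{cor2} and weak lower semicontinuity, at the uniform bound $\|\frac1{N!}\frac{d^k}{d\lambda^k}G_N(\lambda)\|_{L^p\to L^{p'}}\lesssim\lambda^{\sigma-(k+1)}$. The main obstacle is that differentiating $G_N(\lambda)=\lambda^{-1}(N\lambda^{-1}H)^{N+1}e^{-N\lambda^{-1}H}$ directly in $\lambda$ is lossy: each derivative hitting the sharply peaked factor $(\cdots)^{N+1}$ produces coefficients of size $N$, so a term-by-term estimate through Proposition \ref{prop2} loses a factor $N^{k}$, which does not survive the division by $N!$ and the passage to the limit. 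The fix is to differentiate instead the contour representation already produced in the proof of Proposition \ref{prop2}: with $\gamma=1$ and $s=\lambda^{-1}$ it reads, up to a numerical constant,
\[
G_N(\lambda)=N!\,N^{N+1}\,\lambda^{-2}H\int_{\Gamma_N}\frac{e^{-\xi\lambda^{-1}H}}{(\xi-N)^{N+1}}\,d\xi ,
\]
where $\Gamma_N$ is the circle of centre $N$ and radius $N-1$. Here the peak is already absorbed into the $\lambda$-independent kernel $(\xi-N)^{-(N+1)}$, so the only surviving $\lambda$-dependence is the benign factor $\lambda^{-2}e^{-\xi\lambda^{-1}H}$.

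Next I would compute $\frac{d^k}{d\lambda^k}[\lambda^{-2}e^{-\xi\lambda^{-1}H}]$. Writing $t=\lambda^{-1}$ and $\frac{d}{d\lambda}=-t^2\partial_t$, an elementary induction gives
\[
\frac{d^k}{d\lambda^k}\big[\lambda^{-2}e^{-\xi\lambda^{-1}H}\big]=(-1)^k\sum_{r=0}^{k}b_{k,r}\,\lambda^{-(2+k+r)}(\xi H)^r e^{-\xi\lambda^{-1}H},
\]
where the coefficients $b_{k,r}$ depend only on $k$ and $r$ — crucially not on $N$ nor on $\xi$ — and are bounded by a constant depending on $k$. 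Substituting this into the representation above expresses $\frac{d^k}{d\lambda^k}G_N(\lambda)$ as a finite sum over $r$ of terms $N!\,N^{N+1}b_{k,r}\,\lambda^{-(2+k+r)}\int_{\Gamma_N}\xi^{r}H^{r+1}e^{-\xi\lambda^{-1}H}(\xi-N)^{-(N+1)}\,d\xi$. The $H$-powers now sit inside a single exponential, so I would control them by Remark \ref{rem}: since $\Re\xi\ge1$ on $\Gamma_N$, the number $z=\xi/\lambda$ lies in $\mathbb{C}^+$ and $\|H^{r+1}e^{-\xi\lambda^{-1}H}\|_{L^p\to L^{p'}}\lesssim(\Re\xi)^{-(r+1)}|\xi|^{-\sigma}\lambda^{r+1+\sigma}$. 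Using $|\xi-N|=N-1$ together with $N!\,N^{N+1}(N-1)^{-(N+1)}\simeq N!$, and checking that the $\lambda$-powers combine to $\sigma-(k+1)$, each term is bounded by $N!\,\lambda^{\sigma-(k+1)}\int_{\Gamma_N}|\xi|^{r-\sigma}(\Re\xi)^{-(r+1)}\,|d\xi|$.

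Everything then reduces to the uniform-in-$N$ contour bound $\int_{\Gamma_N}|\xi|^{r-\sigma}(\Re\xi)^{-(r+1)}\,|d\xi|\lesssim 1$ for $0\le r\le k$, which I expect to be the one genuinely technical point. Parametrising $\xi=N+(N-1)e^{i\theta}$ and localising near $\theta=\pi$ (where $\Re\xi$ is smallest, of size $\approx 1+N\phi^2$ for $\theta=\pi+\phi$), the scaling $\psi=N\phi$ reduces the integral to $\int_{\mathbb R}(1+\psi^2)^{(r-\sigma)/2}\,d\psi$ plus a harmless remainder, and this converges precisely when $r-\sigma<-1$, i.e.\ when $\sigma>r+1$. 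Since $r\le k$ and the hypothesis $p<\frac{2d}{d+2(k+1)}$ is exactly $\sigma>k+1$, the bound holds for all $r\le k$ uniformly in $N$, giving $\|\frac1{N!}\frac{d^k}{d\lambda^k}G_N(\lambda)\|_{L^p\to L^{p'}}\lesssim\lambda^{\sigma-(k+1)}$; weak lower semicontinuity then finishes the proof exactly as in the case $k=0$. The restriction $k\le d/2-1$ is simply what guarantees that the admissible range of $p$ is non-empty.
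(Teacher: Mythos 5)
Your proof is correct and follows essentially the same route as the paper: Proposition \ref{prop2} for the $k=0$ case (the paper channels it through Theorem \ref{th1}, you through the weak limit of Corollary \ref{cor2} plus a uniform bound, which amounts to the same thing), and for $k\geq 1$ the identical strategy of differentiating the Cauchy contour representation over $\Gamma_N$, invoking Remark \ref{rem} for $\|H^{j}e^{-zH}\|_{L^p\to L^{p'}}$, and checking that the contour integral is bounded uniformly in $N$ precisely when $\sigma>k+1$. If anything, your explicit Leibniz expansion in $r$ carries out for general $k$ the step the paper only sketches for $k=1$ with the remark that "the general case follows by iteration."
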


\begin{proof} The first assertion follows immediately from Proposition \ref{prop2} and Theorem \ref{th1}. 
For the second assertion we give for simplicity a proof for $k = 1$, the general case follows by iteration.  By Corollary \ref{cor2}   we have in the weak sense 
\begin{align*}
 \frac{d}{d\la} dE_H(\lambda)  
 & = \lim_{N\to \infty} \frac{1}{N !}  \frac{d}{d\lambda} \left[ (N\lambda^{-1}H)^{N+1} e^{-\lambda^{-1}NH} \lambda^{-1} \right] \\
 & = \lim_{N\to \infty} \frac{1}{N !}  (NH)^{-1} \frac{d}{d\lambda} \left[ (N\lambda^{-1}H)^{N+2} e^{-\lambda^{-1}NH} \right].
\end{align*}
Following Proposition \ref{prop2}, we write 
$$ (N\lambda^{-1} H)^{N+2} e^{-N\lambda^{-1} H} = (N+2)! N^{N+2} \int_{\Gamma_N} \frac{e^{-\zeta \lambda^{-1} H}}{(\zeta-N)^{N+3}} d\zeta$$
and so
$$ \frac{d}{d\lambda} \left[ (N\lambda^{-1} H)^{N+2} e^{-N\lambda^{-1} H} \right] = (N+2)! N^{N+2} \lambda^{-2} \int_{\Gamma_N} \frac{\zeta H e^{-\zeta \lambda^{-1} H}}{(\zeta-N)^{N+1}} d\zeta.$$
Hence, 
$$ \frac{1}{N !}  (NH)^{-1} \frac{d}{d\lambda} \left[ (N\lambda^{-1}H)^{N+2} e^{-\lambda^{-1}NH} \right] = \frac{(N+2)!}{(N-1)!}  N^{N} \lambda^{-2} \int_{\Gamma_N} \frac{\zeta e^{-\zeta \lambda^{-1} H}}{(\zeta-N)^{N+1}} d\zeta.$$
Using the dispersive estimate, it follows that 
\begin{align*}
 &\frac{1}{N!}  (NH)^{-1} \left\| \frac{d}{d\lambda} \left[ (N\lambda^{-1}H)^{N+2} e^{-\lambda^{-1}NH} \right] \right\|_{L^p \to L^{p'}}\\
  & \lesssim  \frac{(N+2)!}{(N-1)!}  N^{N} \lambda^{-2} \int_{\Gamma_N} \frac{|\zeta| \|e^{-\zeta \lambda^{-1} H}\|_{L^p \to L^{p'}} }{|\zeta-N|^{N+1}} d\zeta \\
& \lesssim  \frac{(N+2)!}{(N-1)!}  \frac{N^{N}}{(N-1)^{N+1}} \lambda^{-2} \int_{\Gamma_N} |\zeta|^{1-\sigma} \lambda^{\sigma} d\zeta ,
\end{align*}
with $\sigma:=\frac{d}{2}\left(\frac{1}{p}-\frac{1}{p'} \right)$. Such integral was already computed in Proposition \ref{prop2} and is uniformly bounded as soon as $1-\sigma<-1$.
This gives the desired estimate for  $\frac{d}{d\la} E_{H}(\lambda)$.
\end{proof}

\vspace{.5cm}

\vspace{.5cm}

\noindent
\emph{Fr\'ed\'eric Bernicot,} CNRS - Universit\'e de Nantes, Laboratoire Jean Leray. 2, rue de la Houssini\`ere, 44322 Nantes cedex 3.  France,\\ 
\texttt{Frederic.Bernicot@univ-nantes.fr}

\quad\\
\noindent
\emph{El Maati Ouhabaz,} Institut de Math\'ematiques (IMB), Univ.\  Bordeaux, 351, cours de la Lib\'eration, 33405 Talence cedex, France,\\ 
\texttt{Elmaati.Ouhabaz@math.u-bordeaux1.fr}

\end{document}